\numberwithin{equation}{section}
\DeclareMathOperator{\im}{Im}
\renewcommand{\phi}{\varphi}
\newcommand{\pw}{\mathcal{P}W_\pi}
\newcommand{\rl}{\mathbb{R}}
\newcommand{\he}{\mathcal{H}(E)}
\newcommand{\closspan}{\overline{\rm Span}\,}
\newtheorem{Thm}{Theorem}[section]
\newtheorem{theorem}[Thm]{Theorem}
\newtheorem{question}{Question}
\newtheorem{proposition}[Thm]{Proposition}
\begin{document}
\sloppy
\title[Complementability of exponential systems]{Complementability of exponential systems}
\author{Yurii Belov}
\address{Yurii Belov,
\newline Chebyshev Laboratory, St.~Petersburg State University, St. Petersburg, Russia,
\newline {\tt j\_b\_juri\_belov@mail.ru}
}
\thanks{Author was supported by RNF grant 14-21-00035.}

\begin{abstract} We prove that any incomplete system of complex exponentials $\{e^{i\lambda_n t}\}$ in $L^2(-\pi,\pi)$ is a subset of some complete and minimal system of exponentials. In addition, we prove analogous statement for systems of reproducing kernels in de Branges spaces.
\end{abstract} 


\maketitle

\section{Introduction}

 Let a sequence $\Lambda\subset\mathbb{C}$ be such that the system of exponential functions $\{e^{i\lambda t}\}_{\lambda\in\Lambda}$ is complete and minimal in $L^2(-\pi,\pi)$. Numerous papers  are devoted to the different aspects of theory of exponential systems such as completeness, Riesz basis  property,
Riesz sequence property, linear summation methods e.t.c. We refer to \cite{Red, young2, Koo1, HNP} for the details.
The geometry of such sequences is not well-undrestood. Nevertheless there exist a non-geometric necessary and sufficient conditions for the sequence $\{e^{i\lambda t}\}_{\lambda\in\Lambda}$ to be complete and minimal; see Theorem 4 in Lecture 18 of \cite{Lev}.

The systems of exponentials are much more regular than an arbitrary system of vectors in $L^2(-\pi,\pi)$. For example, it is easy to verify that  if $e^{iat}\in\closspan\{e^{i\lambda t}\}_{\lambda\in\Lambda}$, $a\notin\Lambda$, then $\closspan\{e^{i\lambda t}\}_{\lambda\in\Lambda}=L^2(-\pi,\pi)$.

The system $\{e^{i\lambda t}\}_{\lambda\in\Lambda}$ is said to be a Riesz basis if there exists an isomorphism
$T : L^2(-\pi,\pi)\rightarrow L^2(-\pi,\pi)$
such that $T(e^{int})=e^{i\lambda_n t}\cdot\|e^{i\lambda_n t}\|^{-1}$, $n\in\mathbb{Z}$ (or, which is the same, $\{e^{i\lambda t}\}_{\lambda\in\Lambda}$ is an image of an orthogonal basis under a bounded invertible operator). Moreover, $\{e^{i\lambda t}\}_{\lambda\in\Lambda}$ is said to be a Riesz sequence if it is a Riesz basis of the closure of the space spanned by elements from $\{e^{i\lambda t}\}_{\lambda\in\Lambda}$.
 
The next question was raised in \cite[p. 196]{Nak}:

\medskip

{\bf Question.} {\it Can every Riesz sequence of complex exponentials be complemented
up to a complete and minimal system of complex exponentials?}

\medskip

K. Seip has shown in \cite[Theorem 2.8]{S} that system $\{e^{\pm i(n+\sqrt{n})t}\}_{n>1}$ is a Riesz sequence of complex
exponentials in $L^2(-\pi,\pi)$ which cannot be complemented up to a Riesz basis. We will give a positive answer to Question. Moreover, we don't need Riesz sequence assumption.

\begin{theorem}
If $\{e^{i\lambda t}\}_{\lambda\in\Lambda}$ is an incomplete system in $L^2(-\pi,\pi)$, then there exists a sequence $S\subset\mathbb{R}$, $\Lambda\cap S=\emptyset$ such that the system $\{e^{i\lambda t}\}_{\lambda\in\Lambda\cup S}$ is complete and minimal in $L^2(-\pi,\pi)$.
\label{mainth}
\end{theorem}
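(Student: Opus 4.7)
My plan is to reduce the theorem to the construction of a generating function for the enlarged exponential system, and then to build one by moving countably many zeros of the canonical generator $\sin(\pi z)$ onto $\Lambda$. Via the Paley--Wiener identification $L^2(-\pi,\pi)\cong PW_\pi$, the system $\{e^{i\mu t}\}_{\mu\in M}$ is complete and minimal if and only if there is an entire function $G$ of exponential type $\pi$ whose simple zero set is exactly $M$, with $G(z)/(z-\mu)\in L^2(\mathbb R)$ for every $\mu\in M$ and $G\notin L^2(\mathbb R)$; the biorthogonal functions are then $F_\mu(z)=G(z)/((z-\mu)G'(\mu))\in PW_\pi$. It thus suffices to produce such a $G$ with zero set $\Lambda\cup S$ for some $S\subset\mathbb R$ disjoint from $\Lambda$. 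Two facts are available from the hypotheses: by the introductory remark, incompleteness of $\{e^{i\lambda t}\}_\Lambda$ already forces the system to be minimal; and incompleteness provides a non-zero $F_0\in PW_\pi$ vanishing on $\Lambda$, which in turn forces $\Lambda$ to satisfy the Blaschke condition in each half-plane and to have upper Beurling density at most one.

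Using this density bound, I would choose an injection $\lambda\mapsto n_\lambda$ from $\Lambda$ into $\mathbb Z$ with $|n_\lambda-\operatorname{Re}\lambda|$ uniformly bounded---a Hall-type marriage argument supplies one as soon as the density is bounded by one---and put
\[
G(z)\;:=\;\sin(\pi z)\prod_{\lambda\in\Lambda}\frac{z-\lambda}{z-n_\lambda}\,e^{c_\lambda(z)},\qquad S\;:=\;\mathbb Z\setminus\{n_\lambda:\lambda\in\Lambda\},
\]
with Weierstrass convergence exponents $c_\lambda(z)$ chosen so that the infinite product defines an entire function of exponential type zero. The poles of the product at the $n_\lambda$ are cancelled by zeros of $\sin(\pi z)$, so $G$ is entire; by construction $G$ has exponential type $\pi$, simple zero set exactly $\Lambda\cup S$, and $S\subset\mathbb R\setminus\Lambda$.

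The technical heart is then to check the $L^2$-size conditions on $G$. The inclusion $G(z)/(z-\mu)\in L^2(\mathbb R)$ follows from an upper bound on the modulus of the Blaschke-type product on $\mathbb R$, reducing to the classical $\sin(\pi z)/(z-n)\in L^2(\mathbb R)$. The condition $G\notin L^2(\mathbb R)$---equivalent to $\Lambda\cup S$ being a uniqueness set for $PW_\pi$, and hence to completeness of the enlarged exponential system---requires a matching pointwise \emph{lower} bound on the product away from the $n_\lambda$. Each factor $(z-\lambda)/(z-n_\lambda)$ has modulus one on $\mathbb R$ up to a perturbation controlled by $|n_\lambda-\operatorname{Re}\lambda|$ and $|\operatorname{Im}\lambda|$, which can be summed through the Blaschke condition on $\Lambda$; establishing the required two-sided estimate is what I expect to be the main obstacle. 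It is precisely this balance that fails for the stronger Riesz-basis completion problem (cf.\ the counterexample of Seip cited above), while it remains attainable at the softer level of completeness plus minimality, which is what allows the conclusion to hold without any Riesz-sequence hypothesis.
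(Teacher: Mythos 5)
Your reduction to a generating function is the standard one and matches the paper's Proposition 2.1, but the construction you propose does not close the argument, and the gap is not merely technical. First, the step you lean on for the matching --- that incompleteness forces $\Lambda$ to have upper Beurling density at most one, so that a bounded-displacement injection $\lambda\mapsto n_\lambda\in\mathbb{Z}$ exists --- is unjustified: incompleteness only gives a nonzero $F_0\in\pw$ vanishing on $\Lambda$, and zero sets of Paley--Wiener functions may contain clumps of unboundedly many points in intervals of fixed length (only averaged Levinson-type densities and the Blaschke condition are forced), so no uniformly bounded matching into $\mathbb{Z}$ need exist. Second, and more seriously, even when such a matching exists the rigid choice $S=\mathbb{Z}\setminus\{n_\lambda\}$ can fail outright, not just resist estimation. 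Take for instance $\Lambda=\{n+i\sqrt{n}\}_{n\ge 1}$ (a Blaschke sequence which one expects to be incomplete, since already $\{n\}_{n\ge1}$ is). Your product would be $G(z)=\sin(\pi z)\prod_{n\ge1}\frac{z-n-i\sqrt n}{z-n}$, and on the real axis each factor has modulus $\bigl(1+\frac{n}{(x-n)^2}\bigr)^{1/2}\ge 1$, with the terms $n\approx x$ contributing $\log|G(x)|\gtrsim \sqrt{x}$; then $G/(z-\mu)\notin L^2(\mathbb{R})$ for every $\mu$, so the enlarged system is complete but not minimal. The correct completing set must then be substantially sparser than ``the leftover integers,'' and choosing it is exactly the hard, borderline part of the problem ($G\notin L^2$ while $G/(z-\mu)\in L^2$ for all zeros $\mu$), which your proposal defers as ``the main obstacle'' without a mechanism to achieve it.

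By contrast, the paper avoids any explicit construction: it forms $\mathcal{H}_{\Lambda,\pi}=\{T \text{ entire}: G_\Lambda T\in\pw\}$, verifies the de Branges axioms (A1)--(A3) (the only nontrivial point being (A3), handled via the fact that $G_\Lambda/G_\Lambda^*$ is a ratio of Blaschke products), and then invokes de Branges's Theorem 22: the space has an orthogonal basis of reproducing kernels $\{k_{t_n}\}$ at the real zero set of $A$, which is automatically a minimal uniqueness set; taking $S=\{A=0\}$ (with the phase $\alpha$ chosen so that $S\cap\Lambda=\emptyset$) and applying Proposition 2.2 finishes the proof. This is nonconstructive precisely because, as your example-level difficulties show, the completing set $S$ must be adapted to $\Lambda$ in a way that no fixed recipe like ``complementary integers of a matching'' can capture. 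If you want to salvage a constructive route, you would need to build $S$ from the function $G_\Lambda$ itself (e.g.\ from a phase function of an associated Hermite--Biehler function), which is essentially what the de Branges machinery does for you.
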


In \cite{Nak} this was proved only for very specific real sequences $\Lambda$ ($\Lambda=\{n+\delta_n\}_{n\in\mathbb{Z}}$, $\delta_n$ is an increasing sequence). It is interesting to note that there exist a lot of complete exponential systems with no complete and minimal subsequences. The simplest example is  any sequence $\{\lambda_n\}$ such that $|\lambda_n|\rightarrow0$ (but it may happen even if $|\lambda_n|\rightarrow\infty$; see Section \ref{s4}).

The proof of Theorem \ref{mainth} is nonconstructive and based on deep and significant de Branges theory of entire functions. In $\S3$ we will prove the analogous theorem for the systems of reproducing kernels in {\it an arbitrary de Branges space}. It would be interesting to find a direct and constructive proof of Theorem \ref{mainth}.

In the next section we transfer our problem to the Paley--Wiener space of entire functions.  The Subsection 3.1 is devoted to the de Branges theory. The Subsection 3.2 is devoted to the proof of our result.

\section{Paley--Wiener space}

We reformulate our problem in the Paley-Wiener space 
$$\pw:=\{F: F(z)=\frac{1}{2\pi}\int_{-\pi}^{\pi}g(t)e^{itz}dt,\quad g\in L^2(-\pi,\pi)\}.$$
The Fourier transform $\mathcal{F}$ acts unitarily from $L^2(-\pi,\pi)$ onto $\pw$, under this transform the exponential functions become the reproducing kernels in $\pw$,
$$\mathcal{F} \left ( e^{-i\bar{\lambda}t} \right ) =\frac{\sin \pi(z-\bar{\lambda})}{\pi(z-\bar{\lambda})}=:k^{\pw}_\lambda(z),$$
$$(F,k^{\pw}_\lambda)_{\pw}=F(\lambda).$$
This gives us the following description (see e.g. Theorem 4 in Lecture 18 of \cite{Lev}).
\begin{proposition} The system $\{e^{i\lambda t}\}_{\lambda\in\Lambda}$ is complete and minimal in $L^2(-\pi,\pi)$ if and only if  
the following three conditions hold
\begin{enumerate}
\begin{item}
the product 
\begin{equation}
G(z):=\lim_{R\rightarrow\infty}\prod_{|\lambda|< R}\biggl{(}1-\frac{z}{\lambda}\biggr{)}
\label{prod}
\end{equation}
converges to an entire function $G$;
\end{item}
\begin{item}
For some (any) $\lambda\in \Lambda$ we have $\frac{G(z)}{z-\lambda}\in\pw$;
\end{item}
\begin{item}
There is no non-zero entire $T$ such that $GT\in\pw$.
\end{item}
\end{enumerate}
\end{proposition}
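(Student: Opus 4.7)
The plan is to translate everything to the Paley--Wiener model: under the Fourier isometry $\mathcal F$, the system $\{e^{i\lambda t}\}_{\lambda\in\Lambda}$ corresponds (after a harmless reflection/conjugation of the index set, which I will suppress) to a system of reproducing kernels $\{k^{\pw}_\lambda\}_{\lambda\in\Lambda}$ in $\pw$. Completeness then becomes the statement that $F\in\pw$ with $F|_\Lambda=0$ implies $F=0$, and minimality becomes the existence, for each $\lambda_0\in\Lambda$, of $g_{\lambda_0}\in\pw$ with $g_{\lambda_0}(\lambda_0)\neq 0$ and $g_{\lambda_0}(\lambda)=0$ for all $\lambda\in\Lambda\setminus\{\lambda_0\}$. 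Conditions (i)--(iii) transfer correspondingly.

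For the $(\Leftarrow)$ direction I assume (i)--(iii) and use $g_{\lambda_0}(z):=G(z)/(z-\lambda_0)$. By (ii) this lies in $\pw$; it vanishes on $\Lambda\setminus\{\lambda_0\}$ because $G$ vanishes on $\Lambda$; and $g_{\lambda_0}(\lambda_0)=G'(\lambda_0)\neq 0$ since the zeros of $G$ in (i) are simple. This proves minimality. For completeness, if $F\in\pw$ vanishes on $\Lambda$, then $T:=F/G$ is entire and $GT=F\in\pw$, so (iii) forces $T=0$, hence $F=0$.

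For the $(\Rightarrow)$ direction, completeness gives uniqueness and minimality gives existence of a biorthogonal system $\{g_\lambda\}_{\lambda\in\Lambda}\subset\pw$ with $g_\lambda(\mu)=\delta_{\lambda\mu}$. The first sub-step is to show that $g_{\lambda_0}$ has no zeros outside $\Lambda\setminus\{\lambda_0\}$: if $g_{\lambda_0}(w)=0$ with $w\notin\Lambda$, then $(\lambda_0-w)g_{\lambda_0}(z)/(z-w)$ also belongs to $\pw$ and satisfies the biorthogonality conditions at $\lambda_0$, so uniqueness forces $(\lambda_0-w)g_{\lambda_0}(z)/(z-w)=g_{\lambda_0}(z)$, i.e.\ $g_{\lambda_0}(z)(\lambda_0-z)\equiv 0$, contradicting $g_{\lambda_0}(\lambda_0)=1$. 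Consequently $G(z):=(z-\lambda_0)g_{\lambda_0}(z)$ is entire with simple zero set exactly $\Lambda$, and (ii) holds by construction. A short biorthogonality computation shows that $G$ is independent of $\lambda_0$ up to a multiplicative constant. Condition (iii) is then immediate: if $GT\in\pw$ for some entire $T$, then $GT$ vanishes on $\Lambda$, and completeness forces $GT=0$, hence $T=0$.

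The main obstacle is producing the symmetric product formula (i). The function $G$ just constructed has exponential type $\leq\pi$ and lies in the Cartwright class, because $G/(z-\lambda_0)\in\pw\subset L^2(\mathbb R)$ gives $\int\log^+|G(x)|/(1+x^2)\,dx<\infty$, and $G$ inherits a symmetric indicator $h_G(\pm\pi/2)\leq\pi$ from the Paley--Wiener growth. By the Cartwright--Levin factorization theorem for entire functions of finite exponential type in the Cartwright class (cf.\ \emph{Levin, Lectures on Entire Functions}, Lecture 18), any such $G$ equals a symmetric canonical product over its zeros up to a multiplicative constant, with no exponential correction factor and no extra polynomial prefactor (either $0\in\Lambda$ and is absorbed in the product, or $G(0)\neq 0$). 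This yields (i). That classical factorization is the one ingredient I would invoke as a black box rather than reprove.
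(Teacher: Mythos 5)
First, a point of comparison: the paper does not prove this proposition at all --- it is quoted as a known description (``see Theorem 4 in Lecture 18 of Levin''), so your attempt is measured against the classical argument rather than anything internal to the paper. Your reduction to $\pw$, the whole $(\Leftarrow)$ direction, and the first half of $(\Rightarrow)$ are sound: uniqueness of the biorthogonal family follows from completeness, your trick excludes zeros of $g_{\lambda_0}$ off $\Lambda\setminus\{\lambda_0\}$, and (iii) for any function vanishing on $\Lambda$ is immediate from completeness. (Small omission: you must also rule out \emph{multiple} zeros of $g_{\lambda_0}$ at points $\mu\in\Lambda\setminus\{\lambda_0\}$, which the same uniqueness argument with $w=\mu$ handles.)

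The genuine gap is at the decisive last step. You assert that $G$ ``inherits a symmetric indicator from the Paley--Wiener growth'', so that the Cartwright--Levin factorization carries no factor $e^{i\kappa z}$. That is false for general elements of $\pw$: the function $F(z)=\frac{e^{i\pi z}-1}{iz}=\int_0^\pi e^{izt}\,dt\in\pw$ has $h_F(\pi/2)=0$, $h_F(-\pi/2)=\pi$, and its factorization is $F(z)=\pi\, e^{i\pi z/2}\,\frac{\sin(\pi z/2)}{\pi z/2}$, i.e. it differs from the symmetric product over its zero set $2\mathbb{Z}\setminus\{0\}$ by a nontrivial exponential. In general a Cartwright-class function equals $c\,z^m e^{i\kappa z}$ times the principal-value product with some real $\kappa$, and nothing in the mere inclusion $G/(z-\lambda_0)\in\pw$ forces $\kappa=0$; the symmetry must come from completeness and minimality, which your argument never re-invokes at this point. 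Two standard repairs: (a) if $h_{g_{\lambda_0}}(\pi/2)<\pi$ (or $h_{g_{\lambda_0}}(-\pi/2)<\pi$), then for small $\delta>0$ the function $e^{-i\delta z}g_{\lambda_0}$ (resp.\ $e^{i\delta z}g_{\lambda_0}$) still lies in $\pw$, vanishes on $\Lambda\setminus\{\lambda_0\}$ and is nonzero at $\lambda_0$, so uniqueness of the biorthogonal element forces $e^{\mp i\delta(z-\lambda_0)}\equiv 1$, a contradiction; hence $h_{g_{\lambda_0}}(\pm\pi/2)=\pi$, the indicator is symmetric, $\kappa=0$, and your identification goes through. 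Or (b) do not identify at all: writing $(z-\lambda_0)g_{\lambda_0}=c\,e^{i\kappa z}\Pi$ with $\Pi$ the principal-value product, note that $\Pi/(z-\lambda_0)=c^{-1}e^{-i\kappa z}g_{\lambda_0}$ has indicator at $\pm\pi/2$ equal to that of $\Pi$, which is at most $\pi-|\kappa|$, and is $L^2$ on $\mathbb{R}$, giving (ii); and (iii) for $\Pi$ follows from completeness alone. As written, however, the step asserting the absence of the exponential factor is unjustified, and it is precisely the nontrivial content of Levin's theorem.
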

If the system  $\{e^{i\lambda t}\}_{\lambda\in\Lambda}$ is incomplete, then the product \eqref{prod} may diverge. But since $\Lambda$ satisfies Blaschke condition $\sum_{\lambda\in\Lambda}\frac{|\im \lambda|}{|\lambda|^2}<\infty$  the product
\begin{equation}
G_\Lambda(z):=\prod_{\lambda\in\Lambda}\biggl{(}1-\frac{z}{\lambda}\biggr{)}e^{\Re(\lambda^{-1})z}
\label{prodB}
\end{equation}
converges. Function $G_\Lambda$ is such that $\dfrac{G(z)}{\overline{G(\bar{z})}}$ is  {\it a ratio of two Blaschke products} in $\mathbb{C}^+$.

Let  $\{e^{i\lambda t}\}_{\lambda\in\Lambda}$ be an incomplete system in $L^2(-\pi,\pi)$ and $G_\Lambda$ be a canonical product from \eqref{prodB}. Put
$$\mathcal{H}_{\Lambda,\pi}:=\{T: T - \text{  entire, and } G_\Lambda T\in\pw\}.$$
It is easy to verify that $\mathcal{H}_{\Lambda,\pi}$ is a nontrivial Hilbert space of entire functions (with respect to the norm inheritted from Paley-Wiener space).
\begin{proposition}
Let  $\{e^{i\lambda t}\}_{\lambda\in\Lambda}$ be an incomplete system. 
Then the system  $\{e^{i\lambda t}\}_{\lambda\in\Lambda\cup S}$ is complete and minimal in $L^2(-\pi,\pi)$ if and only if the set $S$ is a minimal uniqueness set in $\mathcal{H}_{\Lambda,\pi}$ (i.e  $S$ is a uniqeness set but $S\setminus\{s_0\}$ is not uniqueness set for some (any) $s_0\in S$).
\label{prop1}
\end{proposition}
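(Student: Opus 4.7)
The plan is to apply Proposition~2.1 to the extended sequence $\Lambda\cup S$ and to translate, via the factorization $G=(z-s_0)G_\Lambda H$ for any fixed $s_0\in S$, the three conditions of Proposition~2.1 into conditions on an element $H\in\mathcal{H}_{\Lambda,\pi}$ that vanishes on $S\setminus\{s_0\}$.

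For the necessity direction, I would start from the generating function $G$ supplied by Proposition~2.1 and observe that $K:=G/G_\Lambda$ is entire (once the different regulators in \eqref{prod} and \eqref{prodB} are absorbed into an exponential factor) with zero set exactly $S$. Setting $H:=K/(z-s_0)$ then gives a nonzero element of $\mathcal{H}_{\Lambda,\pi}$ that vanishes on $S\setminus\{s_0\}$ (since $G_\Lambda H=G/(z-s_0)\in\pw$ by condition~(ii)), showing this set fails to be a uniqueness set. Conversely, any $T\in\mathcal{H}_{\Lambda,\pi}$ vanishing on $S$ gives $G_\Lambda T=G\cdot(T/K)$ with $T/K$ entire, so condition~(iii) of Proposition~2.1 forces $T/K=0$, proving that $S$ itself is a uniqueness set.

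For the sufficiency direction, I would fix $s_0\in S$ and consider the subspace $V:=\{H\in\mathcal{H}_{\Lambda,\pi}:H|_{S\setminus\{s_0\}}=0\}$. By the minimality of $S$, $V\ne\{0\}$; by the uniqueness of $S$, the evaluation functional at $s_0$ has trivial kernel on $V$, so $\dim V=1$. Let $H$ generate $V$. The critical step is to show that $H$ has zero set exactly $S\setminus\{s_0\}$ and, in particular, does not vanish on $\Lambda$: if $H(w)=0$ for some $w\notin S\setminus\{s_0\}$, then by the division lemma for $\pw$ (valid for real $w$, and extended to non-real $w$ via the bound $|z-w|\ge|\im w|$ on $\mathbb{R}$), the function $H/(z-w)$ would lie in $V$, contradicting $\dim V=1$ upon reducing to a simple zero. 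Once this is established, the candidate generating function $G:=(z-s_0)G_\Lambda H$ has zero set exactly $\Lambda\cup S$ and satisfies condition~(ii) since $G/(z-s_0)=G_\Lambda H\in\pw$; condition~(iii) follows because $GT\in\pw$ forces $H\cdot(z-s_0)T\in\mathcal{H}_{\Lambda,\pi}$ to vanish on all of $S$, hence to vanish identically.

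The main obstacle is ruling out extraneous zeros of the generator $H$ in the sufficiency direction; the argument rests on the one-dimensionality of $V$ combined with the division property of $\pw$. A secondary bookkeeping point is matching the symmetric principal value product~\eqref{prod} in condition~(i) of Proposition~2.1 with the Blaschke-regulated product $G_\Lambda$ from~\eqref{prodB}, which introduces an exponential factor $e^{cz}$ that is inessential for the analytic content of the argument.
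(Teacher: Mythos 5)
The paper itself states Proposition~\ref{prop1} without proof, treating it as an immediate consequence of Proposition~2.1 and the definition of $\mathcal{H}_{\Lambda,\pi}$; your route is the natural intended one, and most of it is sound. The necessity direction is correct: $K=G/G_\Lambda$ is entire simply because the (simple) zeros of $G_\Lambda$ sit inside the zero set of $G$ — no exponential regulator even needs to be ``absorbed'' there — and your two observations ($H=K/(z-s_0)\in\mathcal{H}_{\Lambda,\pi}$ by (ii), and $T$ vanishing on $S$ forcing $T/K=0$ by (iii)) are exactly right. Likewise the key lemma in the sufficiency direction ($\dim V=1$, hence $H$ has zero set exactly $S\setminus\{s_0\}$ with simple zeros, via division in $\pw$) is correct and is precisely what one needs.

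The genuine soft spot is your last sentence: the mismatch between $(z-s_0)G_\Lambda H$ and the symmetric product \eqref{prod} is \emph{not} inessential bookkeeping. Proposition~2.1, as stated, ties conditions (ii)--(iii) to the specific function $G$ produced by the principal-value product over $\Lambda\cup S$, and an entire function in $\pw$ with that zero set can differ from its principal-value product by a factor $c\,e^{iaz}$ with $a\in\mathbb{R}\setminus\{0\}$ (e.g. $\frac{e^{i\pi z}-1}{\pi z}=i\,e^{i\pi z/2}\,\frac{\sin(\pi z/2)}{\pi z/2}$); multiplication by such a factor does change membership in $\pw$, so verifying (ii)--(iii) for your candidate $G$ does not, as written, verify them for the $G$ of condition (i) — nor have you shown (i) at all. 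The gap is repairable with what you already have, in either of two ways: (a) bypass condition (i) entirely and prove completeness and minimality directly — completeness because any $F\in\pw$ vanishing on $\Lambda\cup S$ is $G_\Lambda T$ with $T\in\mathcal{H}_{\Lambda,\pi}$ vanishing on $S$, hence $0$; minimality because for every $\mu\in\Lambda\cup S$ one has $G/(z-\mu)=G_\Lambda H+(\mu-s_0)\frac{G_\Lambda H}{z-\mu}\in\pw$, nonvanishing at $\mu$ thanks to your zero-structure lemma; or (b) invoke the Hadamard factorization for Cartwright-class functions (Levin, Lecture 17) applied to $G_\Lambda H\in\pw$ to obtain convergence of the symmetric product, note that the resulting product has the same modulus as $G_\Lambda H$ on $\mathbb{R}$ and type at most $\pi$ (so (ii) survives), and absorb the zero-free factor $e^{iaz}$ into $T$ to transfer (iii). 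Either repair should be stated explicitly; as it stands, the appeal to Proposition~2.1 in the sufficiency direction is incomplete.
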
 
So, it remains to prove that in $\mathcal{H}_{\Lambda,\pi}$ there exists a minimal uniqueness set.

\section{De Branges spaces of entire functions}
\subsection{Preliminary steps} It is well known that $\pw=\mathcal{E}_\pi\cap L^2(\mathbb{R})$, where $\mathcal{E}_\pi$ is a class of entire functions of exponential type at most $\pi$. In other words, we have {\it global growth condition} and {\it inclusion onto some weighted Hilbert space on the real axis}. The key idea is that the space $\mathcal{H}_{\Lambda,\pi}$ is of the same kind. 

Informally speaking, the de Branges spaces form a general class of spaces of such type. Now we remind two possible ways to define de Branges spaces.

1. An entire function $E$ is said to be in the Hermite--Biehler class
if $|E(z)| >|E^*(z)|$,  
$z\in {\mathbb{C}_+}$, where $E^* (z) = \overline {E
(\overline z)}$. With any such function we associate the {\it de
Branges space} $\mathcal{H} (E) $ which consists of all entire
functions $F$ such that $F/E$ and $F^*/E$ restricted to
$\mathbb{C_+}$ belong to the Hardy space $H^2=H^2(\mathbb{C_+})$.
The inner product in $\he$ is given by
$$
(F,G)_{\he} = \int_\rl \frac{F(t)\overline{G(t)}}{|E(t)|^2} \,dt.
$$
2. We will say that $\mathcal{H}$ is a de Branges space if $\mathcal{H}$ is a nontrivial Hilbert space, whose elements are entire functions, which satisfies the following axioms:

\begin{itemize}
\item[(A1)] Whenever $F$ is in the space and has a nonreal zero $w$, the function $F(z)\frac{z-\bar{w}}{z-w}$ is in the space and has the same norm as $F$;
\item[(A2)] For every nonreal number $w$, the evaluation functional $F\mapsto F(w)$ is continuous;
\item[(A3)] The function $F^*$ belongs to the space whenever $F$ belongs to the space and it always has the same norm as $F$.
\end{itemize}

These two definitions are equivalent (see \cite[Theorem 23]{br}). Sometimes it is natural to require additional assumption that for any $w\in\mathbb{C}$ there exists  $F\in\mathcal{H}$ such that $F(w)\neq0$. This corresponds to the situation when $E$ has no real zeros. In what follows we will consider only such spaces.

Any de Branges space has reproducing kernel which is given by 
\begin{equation}
\label{repr}
k_w(z)=\frac{\overline{E(w)} E(z) - \overline{E^*(w)} E^*(z)}
{2\pi i(\overline w-z)} =
\frac{\overline{A(w)} B(z) -\overline{B(w)}A(z)}{\pi(z-\overline w)},
\end{equation}
where the entire functions $A$ and $B$ are defined by $A = \frac{E+E^*}{2}$,
$B=\frac{E^*-E}{2i}$. Functions $A$ and $B$ have only simple real zeros, 
and $E=A - iB$.

If $E(z)=e^{-i\pi z}=\cos(\pi z) - i\sin(\pi z)$, then $\mathcal{H}(E)=\pw$.

Any de Branges space has orthogonal basis which consists of reproducing kernels. More precisely,

\medskip

\cite[Theorem 22]{br}: {\it Let $\mathcal{H}(E)$ be a given space and let $\phi(t):=-\arg E(t)$ be a phase function associated with $E(z)$. If $\alpha$ is a given real number, the system $\{k_{t_n}(z)\}_{\phi(t_n)-\alpha\in\pi\mathbb{Z}}$ is an orthonormal basis for all $\alpha\in[0,\pi)$ except may be one.}

\medskip

For simplicity we will assume that system $\{k_{t_n}(z)\}$ is an orthogonal basis for $\alpha=0$. So, $\{t_n\}=\{z:A(z)=0\}$.

It is interesting to note that if a Hilbert space of entire functions contains {\it two orthogonal bases of reproducing kernels}  then 
it is equal to some de Branges space of entire functions (see \cite{BMS1}).

 In particular,  in any de Branges space there exists a minimal uniqueness set $\{x: A(x)=0\}$.

Further we will use the following simple fact about de Branges spaces which follows from the definition of $\he$.
\begin{proposition} Let $F$ belong to $\he$ and $B$ be a Blaschke product such that $FB$ is an entire function. Then $FB\in\he$.
\label{Bl}
\end{proposition}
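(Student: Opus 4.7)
My plan is to verify the two defining inclusions for $FB\in\mathcal{H}(E)$ directly from the $H^2$ description, namely $FB/E\in H^2(\mathbb{C}_+)$ and $(FB)^*/E\in H^2(\mathbb{C}_+)$. Both ratios are automatically holomorphic on $\mathbb{C}_+$: $FB$ is entire by hypothesis and hence so is $(FB)^*$, while $E$ has no zeros in $\mathbb{C}_+$ because $|E|>|E^*|$ there.

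The first inclusion is almost immediate. Any Blaschke product satisfies $|B(z)|\le 1$ in $\mathbb{C}_+$, so $|FB/E|\le |F/E|$ pointwise, and the uniform $L^2$ bound on horizontal lines for $F/E$ transfers to $FB/E$.

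The second inclusion is the real difficulty, because $B^*=1/B$ on $\mathbb{C}_+$ is meromorphic and unbounded, so the naive modulus comparison fails. My strategy is to appeal to the Smirnov--Krein characterization of $H^2$: a function holomorphic in $\mathbb{C}_+$, of bounded type with non-positive mean type, whose boundary values lie in $L^2(\mathbb{R})$, belongs to $H^2(\mathbb{C}_+)$. The $L^2$ control on $\mathbb{R}$ is direct from $|B(t)|=1$, giving $|(FB)^*/E|(t)=|F/E|(t)$. For the growth, factor $(FB)^*/E=(F^*/E)\cdot(1/B)$; the first factor lies in $H^2$, hence is of bounded type with mean type $\le 0$, and $1/B$ is meromorphic of bounded type with mean type $0$, since a Blaschke product has mean type $0$. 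The poles of $1/B$ at the zeros $\{w_n\}$ of $B$ are cancelled by the zeros of $F^*$ forced there by the entirety of $(FB)^*=F^*B^*$, so the product is genuinely holomorphic on $\mathbb{C}_+$, and Smirnov--Krein closes the argument.

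The main obstacle is precisely this second inclusion: the pointwise trick used in the first part is unavailable, and one must pass through the bounded-type/mean-type characterization of $H^2$ to promote the $L^2$ boundary data to an honest $H^2$ estimate. A more hands-on alternative would apply axiom (A1) inductively to the partial Blaschke products $B_N$, yielding $FB_N\in\mathcal{H}(E)$ with $\|FB_N\|=\|F\|$, and then identify the weak limit with $FB$ via continuity of point evaluations; this avoids Smirnov--Krein at the cost of an extra compactness step.
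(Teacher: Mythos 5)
The paper offers no argument for this proposition beyond the remark that it is a ``simple fact \dots\ which follows from the definition of $\mathcal{H}(E)$'', so your write-up supplies a proof where the paper has none, and your overall route --- checking $FB/E\in H^2(\mathbb{C}_+)$ and $(FB)^*/E\in H^2(\mathbb{C}_+)$ directly from the Hermite--Biehler description --- is the natural one. The first inclusion is handled correctly. The step that needs repair is the lemma you invoke for the second inclusion: as you state it (holomorphic in $\mathbb{C}_+$, of bounded type, non-positive mean type, $L^2$ boundary values $\Rightarrow$ $H^2$) it is false, because bounded type plus non-positive mean type does not exclude a singular inner factor in the denominator whose measure sits on $\mathbb{R}$ rather than at infinity. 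For instance $h(z)=e^{ic/z}(z+i)^{-1}$ with $c>0$ is holomorphic in $\mathbb{C}_+$, of bounded type and mean type $0$, its boundary modulus $|t+i|^{-1}$ lies in $L^2(\mathbb{R})$, yet $|h(iy)|=e^{c/y}(1+y)^{-1}$ as $y\to0^+$ is incompatible with the pointwise bound $|f(x+iy)|\le C\|f\|y^{-1/2}$ valid in $H^2$. The correct hypothesis is membership in the Smirnov class $N^+$, and in your situation it does hold: $(FB)^*$ is entire and $E$ has no real zeros (the paper's standing assumption), so $(FB)^*/E$ is analytic across $\mathbb{R}$, and a function of bounded type in $\mathbb{C}_+$ that extends analytically across the real axis has no singular measure on $\mathbb{R}$ in its Nevanlinna factorization; equivalently, you can quote the standard characterization of $\mathcal{H}(E)$ itself (entire $G$ with $G/E$ and $G^*/E$ of bounded type and non-positive mean type in $\mathbb{C}_+$ and $G/E\in L^2(\mathbb{R})$), applied to $G=FB$. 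With that correction, your bookkeeping --- additivity of mean type, mean type $0$ for $1/B$, and the cancellation of the poles of $B^*$ by zeros of $F^*$ forced by entirety of $F^*B^*$ --- is sound.

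Your alternative sketch is also valid and is arguably the route closest to ``follows from the definition'' in the axiomatic form: since $FB$ is entire and $B$ has poles exactly at the conjugates $\bar w_n$ of its zeros, $F$ must vanish at each $\bar w_n$ to the order of the pole, so axiom (A1) with $w=\bar w_n$ multiplies in one Blaschke factor at a time without changing the norm; the partial products $FB_N$ then form a norm-bounded set, and a weakly convergent subsequence has a limit which, by continuity of point evaluations and locally uniform convergence of $B_N$ to $B$ off $\mathbb{R}$, coincides with $FB$ on an open set and hence everywhere. Either route is acceptable; only the overstated form of the Smirnov--Krein lemma needs fixing.
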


\subsection{Proof of Theorem \ref{mainth}} We claim that $\mathcal{H}_{\Lambda,\pi}$ is  a de Branges space (there exists $E$ such that $\he=\mathcal{H}_{\Lambda,\pi}$). It is easy to see that for any $w\in\mathbb{C}$ there exists $F\in\mathcal{H}_{\Lambda,\pi}$ such that $F(w)\neq 0$. Without loss of generality we can assume that $\{z:A(z)=0\}\cap\Lambda=\emptyset$ (we can always choose another $\alpha$ in \cite[Theorem 22]{br}). Put $S=\{z:A(z)=0\}$.
From Proposition \ref{prop1} we get that system $\{e^{i\lambda t}\}_{\lambda\in\Lambda\cup S}$ is a complete and minimal system in $L^2(-\pi,\pi)$. 

To prove the claim we will use axiomatic description of de Branges spaces. We need only to verify that $\mathcal{H}_{\Lambda,\pi}$
satisfies axioms (A1)--(A3). Let $G_\Lambda$ be the canonical product from \eqref{prodB}.

(A2). For any $w\in\mathbb{C}$ the point evaluation functional $F\mapsto F(w)$ is a bounded linear functional on $\mathcal{H}_{\Lambda,\pi}$. Indeed, if $w\notin\Lambda$, then
$$|F(w)|=|G_\Lambda(w)F(w)\slash G_\Lambda(w)|\leq|G_\Lambda(w)|^{-1}\|G_\Lambda F\|_{\pw}\cdot\|k^{\pw}_w\|$$
$$=|G_\Lambda(w)|^{-1}\|F\|_{\mathcal{H}_{\Lambda,\pi}}\cdot\|k^{\pw}_w\|.$$ 
If $w\in\Lambda$, then we can write $F(z)=\frac{1}{2\pi}\int_0^{2\pi}F(w+\varepsilon e^{i\theta})d\theta$, and use the same estimate.

(A1). If $F\in\mathcal{H}_{\Lambda,\pi}$ and $F(w)=0$, then $F(z)\frac{z-\bar{w}}{z-w}\in\mathcal{H}_{\Lambda,\pi}$ and
$\|F(z)\frac{z-\bar{w}}{z-w}\|=\|F\|$.

(A3). If $F\in\mathcal{H}_{\Lambda,\pi}$, then $F^*\in\mathcal{H}_{\Lambda,\pi}$. Indeed, if $G_\Lambda F\in\pw$, then $G_\Lambda^*F^*\in\pw$. Using Proposition \ref{Bl} and the fact that $\dfrac{G_\Lambda}{G^*_\Lambda}$ is a ratio of two Blaschke products we get that $G_\Lambda F^*\in\pw$. \qed

\subsection{Reproducing kernels in de Branges spaces} Theorem \ref{mainth} can be generalized to an arbitrary de Branges space.
\begin{theorem}
Let $\he$ be a de Branges space. If $\{k_\lambda\}_{\lambda\in\Lambda_1}$ is an incomplete system of
reproducing kernels in $\he$, then there exists a system $\{k_\lambda\}_{\lambda\in\Lambda_2}$ such that the system $\{k_\lambda\}_{\lambda\in\Lambda_1\cup\Lambda_2}$ is complete and minimal in $\he$.
\label{deBr}
\end{theorem}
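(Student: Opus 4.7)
The plan is to mirror the proof of Theorem \ref{mainth} almost verbatim, with the ambient space $\pw$ replaced by the given de Branges space $\he$. The incompleteness of $\{k_\lambda\}_{\lambda\in\Lambda_1}$ furnishes a nonzero $F_0\in\he$ vanishing on $\Lambda_1$; since $F_0/E\in H^2(\mathbb{C}^+)$, the sequence $\Lambda_1$ satisfies the Blaschke condition in each half-plane, so one can form the genus-one canonical product $G_{\Lambda_1}$ exactly as in \eqref{prodB}. Its crucial property is that $G_{\Lambda_1}/G_{\Lambda_1}^*$ is a ratio of two Blaschke products in $\mathbb{C}^+$.

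Next I would introduce the auxiliary Hilbert space
$$\mathcal{H}_{\Lambda_1,E}:=\{T:T\text{ entire and }G_{\Lambda_1}T\in\he\}$$
with norm $\|T\|:=\|G_{\Lambda_1}T\|_{\he}$. It is nontrivial because $T_0:=F_0/G_{\Lambda_1}$ is entire and lies in it. To show it is itself a de Branges space, I would verify axioms (A1)--(A3) by the same three short computations given at the end of Subsection 3.2, simply replacing $\pw$ by $\he$ and $k^{\pw}_w$ by $k^{\he}_w$. The only non-routine step is (A3): one has $G_{\Lambda_1}^*T^*\in\he$ from axiom (A3) of $\he$, and the passage to $G_{\Lambda_1}T^*\in\he$ uses Proposition \ref{Bl} together with the Blaschke-ratio property of $G_{\Lambda_1}/G_{\Lambda_1}^*$.

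Having realized $\mathcal{H}_{\Lambda_1,E}$ as some $\mathcal{H}(E')$, I would apply the basis theorem of de Branges quoted in Subsection 3.1: for all but at most one $\alpha\in[0,\pi)$, the zero set of $\cos\alpha\cdot A'+\sin\alpha\cdot B'$ is an orthogonal basis of reproducing kernels, and in particular a minimal uniqueness set, in $\mathcal{H}(E')$. Since $\Lambda_1$ is countable, I can choose $\alpha$ so that the resulting zero set $\Lambda_2$ is disjoint from $\Lambda_1$. The de Branges analog of Proposition \ref{prop1}, established by the same bookkeeping (every $F\in\he$ vanishing on $\Lambda_1$ factors as $G_{\Lambda_1}T$ with $T\in\mathcal{H}_{\Lambda_1,E}$, so that completeness and minimality of $\{k_\lambda\}_{\lambda\in\Lambda_1\cup\Lambda_2}$ in $\he$ translate directly into the uniqueness-set and non-redundancy conditions on $\Lambda_2$), then yields the theorem.

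The main obstacle is axiom (A3) for $\mathcal{H}_{\Lambda_1,E}$. This is exactly where the specific form of $G_{\Lambda_1}$ as a genus-one canonical product is needed: without the fact that $G_{\Lambda_1}/G_{\Lambda_1}^*$ is a ratio of Blaschke products in $\mathbb{C}^+$, the step from $G_{\Lambda_1}^*T^*\in\he$ back to $G_{\Lambda_1}T^*\in\he$ could not be made, and the auxiliary space would fail to be closed under the involution $F\mapsto F^*$. Every other step is a direct transcription of arguments already present in the paper.
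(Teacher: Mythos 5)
Your strategy is exactly the paper's: the published proof of Theorem \ref{deBr} consists of introducing $\mathcal{H}_{\Lambda_1,\he}=\{T:\ T\ \text{entire},\ G_{\Lambda_1}T\in\he\}$ and repeating the verification of axioms (A1)--(A3), the appeal to de Branges' basis theorem (\cite[Theorem 22]{br}) with a choice of $\alpha$ making the basis zero set disjoint from $\Lambda_1$, and the analogue of Proposition \ref{prop1}. So in outline there is nothing new here, and the (A1)--(A3) computations, the use of Proposition \ref{Bl}, and the bookkeeping you describe all go through as you say.

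The one step that does not survive the transcription as you wrote it is the construction of $G_{\Lambda_1}$ ``exactly as in \eqref{prodB}''. The Blaschke condition in each half-plane, which is all that $F_0/E,\ F_0^*/E\in H^2(\mathbb{C_+})$ gives you, does not imply convergence of the genus-one product \eqref{prodB}: that product also requires $\sum_{\lambda\in\Lambda_1}|\lambda|^{-2}<\infty$. In the Paley--Wiener case this is automatic because $F_0$ is of Cartwright class, so its zero counting function is $O(r)$; in a general de Branges space the elements can grow arbitrarily fast and $\Lambda_1$ may well violate $\sum|\lambda|^{-2}<\infty$ (for instance a real zero sequence of density $r^2$), so the product you propose can diverge. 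What the argument actually needs is only the existence of \emph{some} entire $G_{\Lambda_1}$ with simple zeros exactly on $\Lambda_1$ such that $G_{\Lambda_1}/G^*_{\Lambda_1}$ is a ratio of Blaschke products in $\mathbb{C_+}$; this can be arranged, e.g., by writing the Blaschke products over $\Lambda_1\cap\mathbb{C_+}$ and over the reflected lower zeros (these are meromorphic inner functions, since $\Lambda_1$ has no accumulation points) in the form $E_\pm^*/E_\pm$ with $E_\pm$ Hermite--Biehler, and multiplying by a real-on-$\mathbb{R}$ Weierstrass factor for the real zeros. With such a $G_{\Lambda_1}$ in place of the canonical product, the rest of your proof is a faithful reproduction of the paper's argument. (The paper itself is silent on this point, simply reusing the symbol $G_{\Lambda_1}$, but your explicit claim that \eqref{prodB} converges under the Blaschke condition alone is false in general and should be replaced as indicated.)
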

\begin{proof}
Let 
$$\mathcal{H}_{\Lambda_1,\he}:=\{T: T - \text{ entire and } G_{\Lambda_1} T\in\he\}.$$
Now we can repeat the arguments from the previous subsection.
\end{proof}
Theorem \ref{deBr} can be useful for the research of strong M-basis property in de Branges spaces (see \cite[Section 8]{BBB2}).

\section{Concluding remarks and open questions\label{s4}}

1. It is well known that there is no $F\in\pw\setminus\{0\}$ which vanishes only on imaginary axis. On the other hand, the set $\{z:F(z)=0\}$ has to satisfy the Blaschke condition. Hence, the system $\{e^{nt}\}_{n\in\mathbb{Z}}$ is a complete set in $L^2(-\pi,\pi)$ which contains no complete and minimal subset.

2. It is interesting to know whether Theorem \ref{deBr} true for the Fock space of entire functions.

3. It is not clear whether it is possible to keep some regularity of the system of reproducing kernels when we complement our system.
\begin{question}
Is it true that any incomplete system of reproducing kernels with infinite defect $dim(L^2(-\pi,\pi)\ominus\closspan\{e^{i\lambda t}\}_{\lambda\in\Lambda})=\infty$ can be complemented up to a strong M-basis?
\end{question}
For a detailed review about strong M-basis property for system of complex exponentials in $L^2(-\pi,\pi)$ see \cite{BBB2} and references therein.

\end{document}